\newcommand{\vanish}[1]{{}}
\newcommand{\qed}{\mbox{$\Box$}\vspace{\baselineskip}}
\newenvironment{proof}{\noindent {\bf Proof:}}{{\qed}}
\newtheorem{theorem}{Theorem}[section]
\newtheorem{proposition}[theorem]{Proposition}
\newtheorem{definition}[theorem]{Definition}
\newcommand{\hz}{\hat{0}}
\newcommand{\ho}{\hat{1}}
\newcommand{\ab}{\av\bv}
\newcommand{\av}{{\bf a}}
\newcommand{\bv}{{\bf b}}
\newcommand{\cd}{\cv\dv}
\newcommand{\cv}{{\bf c}}
\newcommand{\dv}{{\bf d}}
\newcommand{\wt}{\operatorname{wt}}
\newcommand{\covered}{\prec}
\begin{document}

\title{On the non-existence of an $R$-labeling\thanks{To appear
in the journal {\em Order.}}}

\author{{\sc Richard EHRENBORG}\thanks{The first author was partially funded by
National Science Foundation grant DMS-0902063.}
          \hspace*{2 mm} and \hspace*{2 mm}
        {\sc Margaret READDY}}

\date{}
\maketitle

\begin{abstract}
We present a family of Eulerian posets which
does not have any $R$-labeling.
The result uses a structure theorem
for $R$-labelings of the butterfly poset.
\end{abstract}

\newcommand{\abc}{\vspace*{-3mm}}
\newcommand{\xyz}{\vspace*{-4mm}}

\abc
\section{Introduction}
\xyz

For a graded poset
the property of having an $R$-labeling is a precursor
to face enumerative results.
The slightly stronger condition of an
$EL$-labeling gives the topological condition of shellability of 
the order complex of the poset.
If a graded poset has an $R$-labeling,
this implies that every entry of its
flag $h$-vector is non-negative.
See~\cite{Bjorner,Stanley_EC_I} for details.
Hence the most straightforward way
to show that a poset lacks an $R$-labeling is to demonstrate a negative
entry in its flag $h$-vector. If the poset has a non-negative
flag $h$-vector, the problem is more difficult.

In this paper
we construct a family of posets where each member has
a positive flag $h$-vector but has no $R$-labeling.
Moreover, half of the examples have
the added attribute that they are Eulerian posets,
that is, each nontrivial interval satisfies
the Euler-Poincar\'e relation.
It is noteworthy that
these Eulerian posets also
have negative coefficients in their $\cd$-indexes.
It is premature for us to assert that 
the lack of an $R$-labeling is related to these negative coefficients.
Further research regarding these types of issues is necessary.

We begin by reviewing the definition of an $R$-labeling,
a notion that has been extended since it was first discovered by
Bj\"orner and Stanley. We reformulate this notion to
a triple assignment.
We then study $R$-labelings of the butterfly poset,
that is,
the unique poset which has two elements of each rank
and every element covers all of the elements of one lower rank.
Using triple assignments we give a structure
theorem for $R$-labelings on the butterfly poset.
We construct a family of  examples by gluing two
butterfly posets together.
The structure theorem is used to show
that these examples cannot have an $R$-labeling.
We end the paper with a number of open questions.

\abc
\section{Graded posets and $R$-labelings}
\xyz

We recall some basic
properties of partially ordered sets (posets),
including
their flag $f$- and flag $h$-vectors. We refer the reader
to Chapter 3 of Stanley's book~\cite{Stanley_EC_I}
for a more complete introduction.
A poset $P$ is {\em graded}
if has a minimal element $\hz$, maximal element $\ho$
and a rank function $\rho$ such that $\rho(\hz) = 0$.
We say that a graded poset $P$ is of rank $n$ if
$\rho(\ho) = n$.
For a poset of rank $n$ and a subset
$S = \{s_{1} < s_{2} < \cdots < s_{k}\}$ of $\{1,2, \ldots, n-1\}$,
define $f_{S}$ to be the number of chains through the ranks of $S$,
that is,
$$   f_{S}
   =
     |\{ \{\hz < x_{1} < x_{2} < \cdots < x_{k} < \ho\}
       \:\: : \:\:
          \rho(x_{i}) = s_{i} \}|   . $$
These $2^{n-1}$ values constitute the {\em flag $f$-vector}
 of the poset.
An equivalent notion is the {\em flag $h$-vector} defined by
the invertible relation
$$   h_{S} = \sum_{T \subseteq S} (-1)^{|S-T|} \cdot f_{T}  . $$

For certain classes of posets
the entries in the flag $h$-vector are non-negative.
This is not at all apparent from the alternating sum
defining the flag $h$-vector.
One explanation of this non-negativity is given
by $R$-labelings.
Let $E(P)$ the set of all cover relations of the poset $P$,
that is,
$E(P)
   =
   \{ (x,y) \in P^{2} \:\: : \:\: x \covered y \}$.
\begin{definition}
An $R$-labeling of a poset $P$ is
a labeling set $\Lambda$ with a relation $\sim$ on its elements
and a function $\lambda: E(P) \longrightarrow \Lambda$
such that in every non-trivial interval $[x,y]$ in the poset $P$
there is a unique maximal chain
$x = x_{0} \covered x_{1} \covered \cdots \covered x_{k} = y$
such that
$\lambda(x_{0},x_{1}) \sim 
 \lambda(x_{1},x_{2}) \sim 
 \cdots \sim
 \lambda(x_{k-1},x_{k})$.
This unique chain is called~{\em rising}.
\label{definition_R}
\end{definition}
In the original definition by Bj\"orner and Stanley~\cite{Bjorner}
the set $\Lambda$ is a totally ordered set.
This was later extended to a partially ordered set $\Lambda$
by Bj\"orner and Wachs~\cite{Bjorner_Wachs}.
However, since none of the poset axioms are used from the
poset $\Lambda$, the most general definition so far is the
one given above.

The next result presents the connection between
$R$-labelings and the flag $h$-vector.
For a maximal chain
$c = \{\hz = x_{0} \covered x_{1} \covered x_{2} \covered
     \cdots \covered x_{n} = \ho\}$, we define its {\em descent
set} to be
$$   D(c)
   =
     \{i \in \{1, \ldots, n-1\} \:\: : \:\: 
            \lambda(x_{i-1},x_{i}) \not\sim \lambda(x_{i},x_{i+1})\} . $$
\begin{theorem}[Bj\"orner and Stanley]
Let $P$ be a graded poset with an $R$-labeling.
The number of maximal chains with descent set $S$ is given by
the flag $h$-vector entry $h_{S}$.
\end{theorem}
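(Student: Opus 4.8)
The plan is to reduce the statement, via Möbius inversion on the Boolean lattice, to a clean bijective count. Write $g_{S}$ for the number of maximal chains of $P$ with descent set exactly $S$. Since the flag $h$-vector is defined by the invertible relation $h_{S} = \sum_{T \subseteq S} (-1)^{|S-T|} \cdot f_{T}$, proving $g_{S} = h_{S}$ for all $S$ is equivalent to proving $f_{S} = \sum_{T \subseteq S} g_{T}$ for all $S$; and the right-hand side here is precisely the number of maximal chains of $P$ whose descent set is \emph{contained} in $S$. So it suffices, for $S = \{s_{1} < \cdots < s_{k}\} \subseteq \{1, \ldots, n-1\}$, to biject the chains $\hz < x_{1} < \cdots < x_{k} < \ho$ with $\rho(x_{i}) = s_{i}$ (these are counted by $f_{S}$) with the maximal chains $c$ of $P$ satisfying $D(c) \subseteq S$.

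For the forward map, set $s_{0} = 0$, $x_{0} = \hz$, $s_{k+1} = n$, $x_{k+1} = \ho$. Given a chain $\hz = x_{0} < x_{1} < \cdots < x_{k+1} = \ho$ through the ranks of $S$, Definition~\ref{definition_R} supplies, in each interval $[x_{i-1}, x_{i}]$, a \emph{unique} rising maximal chain; concatenating these $k+1$ saturated chains produces a maximal chain $c$ of $P$. I claim $D(c) \subseteq S$: a position $j$ with $s_{i-1} < j < s_{i}$ lies strictly inside the $i$-th segment, where consecutive labels are $\sim$-related by risingness, so $j \notin D(c)$; the only ranks where a descent of $c$ can occur are thus $s_{1}, \ldots, s_{k}$.

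For the inverse map, send a maximal chain $c = \{\hz = y_{0} \covered y_{1} \covered \cdots \covered y_{n} = \ho\}$ with $D(c) \subseteq S$ to the chain $\hz < y_{s_{1}} < \cdots < y_{s_{k}} < \ho$, which passes through the ranks of $S$ since $\rho(y_{s_{i}}) = s_{i}$. These two maps are mutually inverse: starting from such a $c$, the segment $y_{s_{i-1}} \covered \cdots \covered y_{s_{i}}$ has no internal descents (every descent of $c$ sits at a rank belonging to $S$, hence not strictly between $s_{i-1}$ and $s_{i}$), so this segment is a rising maximal chain of $[y_{s_{i-1}}, y_{s_{i}}]$, and by the uniqueness clause of the $R$-labeling it is \emph{the} rising chain reinserted by the forward map; hence restriction followed by insertion returns $c$. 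Conversely, inserting rising chains and then restricting to the ranks of $S$ returns $x_{0} < \cdots < x_{k+1}$, since the $i$-th inserted segment runs from $x_{i-1}$ to $x_{i}$.

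The Boolean-lattice inversion of the first paragraph and the boundary conventions $s_{0} = 0$, $s_{k+1} = n$ are routine bookkeeping. The one step that genuinely carries the argument — and the place I would be most careful — is the appeal to \emph{uniqueness} of the rising chain in each subinterval: existence alone would show only that $f_{S}$ dominates the chain count from one side, and it is uniqueness that makes the restriction map a well-defined two-sided inverse. Everything else is a direct translation between "no descent strictly inside a gap between consecutive ranks of $S$" and "the corresponding segment is the unique rising chain of that subinterval."
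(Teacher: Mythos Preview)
Your argument is correct and is precisely the classical proof: reduce $g_{S}=h_{S}$ to $f_{S}=\sum_{T\subseteq S}g_{T}$ by inclusion--exclusion, then biject $S$-chains with maximal chains having $D(c)\subseteq S$ by filling each gap $[x_{i-1},x_{i}]$ with its unique rising chain, using uniqueness to certify the inverse. The paper does not supply its own proof of this theorem; it simply cites Bj\"orner's original argument and remarks that it carries over to the generalized notion of $R$-labeling, and your write-up is exactly that argument.
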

Although we extended the original notion of $R$-labelings,
the proof in~\cite{Bjorner} still applies.

For a poset $P$ let $W(P)$ denote the set
of triplets of elements that cover each other,
that is,
$$
   W(P) 
           =
   \{ (x,y,z) \in P^{3} \:\: : \:\: x \covered y \covered z \} .
$$

\begin{definition}
A {\em triple assignment} of a poset is a function
$\tau: W(P) \longrightarrow \{\av,\bv\}$
such that for every non-trivial interval $[x,y]$ in the poset
there is a unique maximal chain
$x = x_{0} \covered x_{1} \covered x_{2} \covered \cdots \covered x_{k} = y$
such that $\tau(x_{i},x_{i+1},x_{i+2}) = \av$ for $0 \leq i \leq k-2$.
\end{definition}

\begin{proposition}
The two notions of $R$-labelings and triple assignments are equivalent.
\end{proposition}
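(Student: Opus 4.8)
\noindent {\bf Proof sketch.}
The plan is to prove the two implications separately, exploiting the observation that the general definition of an $R$-labeling constrains the relation $\sim$ only through pairs of labels that sit on consecutive cover relations of a chain: the behaviour of $\sim$ on any other pair of labels, and indeed the nature of the set $\Lambda$ itself, plays no role in deciding whether a chain is rising.

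For the implication from $R$-labelings to triple assignments, I would start with an $R$-labeling $(\Lambda,\sim,\lambda)$ and define $\tau: W(P) \longrightarrow \{\av,\bv\}$ by $\tau(x,y,z) = \av$ if $\lambda(x,y) \sim \lambda(y,z)$ and $\tau(x,y,z) = \bv$ otherwise. For a maximal chain $x = x_{0} \covered x_{1} \covered \cdots \covered x_{k} = y$ of an interval $[x,y]$, the requirement that $\tau(x_{i},x_{i+1},x_{i+2}) = \av$ for all $0 \leq i \leq k-2$ is, by definition, exactly the requirement $\lambda(x_{0},x_{1}) \sim \lambda(x_{1},x_{2}) \sim \cdots \sim \lambda(x_{k-1},x_{k})$, that is, that the chain is rising. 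Hence the unique rising chain in $[x,y]$ is precisely the unique all-$\av$ chain, so $\tau$ is a triple assignment. (When $k \leq 1$ both conditions are vacuous and the single maximal chain trivially satisfies them.)

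For the converse, given a triple assignment $\tau$, I would take the labeling set to be $\Lambda = E(P)$ with $\lambda: E(P) \longrightarrow \Lambda$ the identity map, and define the relation $\sim$ on $\Lambda$ by declaring $(x,y) \sim (u,v)$ to hold precisely when $y = u$ --- so that $x \covered y \covered v$ --- and $\tau(x,y,v) = \av$; any other pair of elements of $\Lambda$ is left unrelated. Then a maximal chain $x_{0} \covered x_{1} \covered \cdots \covered x_{k}$ is rising if and only if $(x_{i-1},x_{i}) \sim (x_{i},x_{i+1})$ for every $i$, which by construction holds if and only if $\tau(x_{i-1},x_{i},x_{i+1}) = \av$ for every $i$. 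So once again rising chains and all-$\av$ chains coincide on every interval, and the uniqueness property of $\tau$ becomes exactly the defining property of an $R$-labeling for $(\Lambda,\sim,\lambda)$.

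I do not expect a genuine obstacle here: the statement amounts to a faithful reformulation, and the only points needing attention are that the trivial intervals $[x,y]$ with $x \covered y$ are handled by the vacuous case in both directions, and that --- because ``rising'' refers only to consecutive labels --- declaring non-composable pairs of cover relations unrelated when building $\sim$ from $\tau$ discards nothing. The two constructions are mutually inverse up to this reformulation, which is what justifies working henceforth solely with triple assignments.
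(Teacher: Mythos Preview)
Your proof is correct and follows essentially the same approach as the paper: both directions use the same constructions (defining $\tau(x,y,z)=\av$ iff $\lambda(x,y)\sim\lambda(y,z)$ in one direction, and taking $\Lambda=E(P)$ with $\lambda=\id$ and $(x,y)\sim(y,z)$ iff $\tau(x,y,z)=\av$ in the other). Your version is slightly more explicit about the vacuous cases and the irrelevance of non-composable pairs, but the argument is the same.
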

\begin{proof}
Given a labeling $\lambda$ of the poset $P$,
define the function $\tau$ by
$\tau(x,y,z) = \av$ if and only if $\lambda(x,y) \sim \lambda(y,z)$.
If $\lambda$ is an $R$-labeling then directly we have that
$\tau$ is a triple assignment.

Conversely, let  $\tau : W(P) \longrightarrow \{\av,\bv\}$
be a triple assignment function.
We define a labeling as follows.
Let the label
set $\Lambda$ be the set of all cover relations, that is,
$\Lambda = E(P)$
and the labeling $\lambda$ is given by
$\lambda(x,y) = (x,y)$.
Define the relation $\sim$ on~$\Lambda$ by
$(x,y) \sim (y,z)$ if and only if $\tau(x,y,z) = \av$.
It follows now that if $\tau$ is a triple assignment
then the labeling $\lambda$ is an $R$-labeling.
\end{proof}

The reason the two element set $\{\av,\bv\}$ is used as the
range of a triple function stems from the notion of the
$\ab$-index of a poset.
Let $\av$ and $\bv$ be two non-commutative variables of degree $1$.
For $S$ a subset of the set $\{1, 2, \ldots, n-1\}$
define the monomial $u_{S} = u_{1} u_{2} \cdots u_{n-1}$
by letting $u_{i} = \bv$ if $i \in S$ and
otherwise $u_{i} = \av$. The {\em $\ab$-index} is the
non-commutative polynomial 
$$   \Psi(P)
   =
     \sum_{S}
            h_{S} \cdot u_{S}   .  $$
The $\ab$-index is an equivalent encoding of the flag $h$-vector
of a poset and it has degree one less than the rank of the poset.

For a maximal chain
$c = \{\hz = x_{0} \covered x_{1} \covered x_{2} 
                   \covered \cdots \covered x_{n} = \ho\}$,
define its {\em weight} $\wt(c)$ by
the product
$\wt(c)
  =
      \tau(x_{0},x_{1},x_{2})
    \cdot
      \tau(x_{1},x_{2},x_{3})
    \cdots
      \tau(x_{n-2},x_{n-1},x_{n})$.
The $\ab$-index of a poset $P$
having triple assignment $\tau$
is then given by
$\Psi(P) = \sum_{c} \wt(c)$,
where the sum is over all maximal chains $c$ in~$P$.

Recall a poset is {\em Eulerian} if every non-trivial interval satisfies
the Euler-Poincar\'e relation, that is,
it has the same number of elements of
odd rank as even rank. For Eulerian posets Bayer and
Klapper~\cite{Bayer_Klapper} proved
that the $\ab$-index can be written in terms of
the non-commutative variables $\cv = \av + \bv$
and $\dv = \av\bv + \bv\av$.
This invariant is called the {\em $\cd$-index}.
It offers an efficient encoding of
the entries of the flag $h$-vector of an Eulerian poset.
That a poset has a $\cd$-index is equivalent to that
the flag $f$-vector of the poset satisfies the
generalized Dehn-Somerville relations;
see~\cite{Bayer_Billera}.

\abc
\section{The butterfly poset}
\xyz

The butterfly poset $T_{n}$ is the unique graded poset
of rank $n$ such that there are two elements of rank~$i$
for $1 \leq i \leq n-1$ and every element
different from $\hz$
covers all of the elements
of one rank below.
Note that every interval in the butterfly poset
is a butterfly poset of smaller rank
and that the butterfly poset is an Eulerian poset.
We will denote the elements
of $T_{n}$ by
$\{\hz, x_{1}, \overline{x_{1}}, \ldots, x_{n-1}, \overline{x_{n-1}}, \ho\}$,
where $\rho(x_{i}) = \rho(\overline{x_{i}}) = i$.
For an element $x$ in the butterfly poset
different from the minimal and maximal elements,
let $\overline{x}$ denote the unique element different from $x$
but of the same rank as $x$.
Furthermore, let $\overline{\: \cdot \:}$ also denote the natural involution
on the $2$-element set $\{\av,\bv\}$, that is,
$\overline{\av} = \bv$ and $\overline{\bv} = \av$.

It is easy to verify that the flag $f$- and flag $h$-vectors
of the butterfly poset $T_{n}$ are given by
$$      f_{S} = 2^{|S|} 
    \:\:\:\: \mbox{ and } \:\:\:\:
        h_{S} = 1      $$
for $S$ a subset of the set $\{1,2, \ldots, n-1\}$.
Hence the $\ab$-index of the butterfly poset
is given by
$\Psi(T_n) = (\av+\bv)^{n-1} = \cv^{n-1}$.

Assume $\tau$ is a function
$\tau : W(T_{n}) \longrightarrow \{\av,\bv\}$
such that every interval of length $2$ has a unique
rising chain. Since every length $2$ interval $[x,z]$
is a diamond, we conclude that
$\tau(x,\overline{y},z) = \overline{\tau(x,y,z)}$
where $y$ and $\overline{y}$ are the two unique atoms
(and coatoms!) in the interval $[x,z]$.

For a function
$\tau: W(T_{n}) \longrightarrow \{\av,\bv\}$
call an element $y$ a {\em breakpoint} if
$\hz < y < \ho$ and the value
of $\tau(x,y,z)$ does not depend on $x$ and $z$.
Note that if $y$ is a breakpoint then so is
$\overline{y}$.

\begin{theorem}
Let $n$ be a positive integer greater than or equal
to $2$ and let $\tau$ be a function
$\tau: W(T_{n}) \longrightarrow \{\av,\bv\}$
such that
in every interval of rank $3$ or less there
is a unique rising chain. Then the following two
statements hold:
\vspace*{-5mm}
\begin{itemize}
\item[(i)]
There is a breakpoint $y$ in the poset $T_{n}$.
\vspace*{-2mm}
\item[(ii)]
The function $\tau$ is a triple assignment.
\end{itemize}
\vspace*{-5mm}
\label{theorem_butterfly}
\end{theorem}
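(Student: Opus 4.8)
The plan is to argue by induction on $n$. The base case $n=2$ is immediate: the only non-trivial interval of $T_2$ is the rank-$2$ one, so (ii) is just the hypothesis, and each atom of $T_2$ lies in exactly one triple and hence is vacuously a breakpoint. So assume $n\ge 3$ and the theorem for all smaller ranks. I would first record $\tau$ concretely: label the two elements of each rank $i$ by bits, so that a maximal chain of $T_n$ becomes a word $(\epsilon_1,\dots,\epsilon_{n-1})\in\{0,1\}^{n-1}$. Since every rank-$2$ interval is a diamond in which exactly one of its two maximal chains is rising, this is equivalent to the existence of functions $a_0\colon\{0,1\}\to\{0,1\}$, $a_i\colon\{0,1\}^2\to\{0,1\}$ for $1\le i\le n-3$, and $a_{n-2}\colon\{0,1\}\to\{0,1\}$, such that a maximal chain is rising if and only if $\epsilon_1=a_0(\epsilon_2)$, $\epsilon_k=a_{k-1}(\epsilon_{k-1},\epsilon_{k+1})$ for $2\le k\le n-2$, and $\epsilon_{n-1}=a_{n-2}(\epsilon_{n-2})$. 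In this language a rank-$(i+1)$ element is a breakpoint precisely when $a_i$ is a constant function.

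Next I would analyze one rank-$3$ interval $[u,w]$, which is a copy of $T_3$ with four maximal chains. For each coatom $q$ of $[u,w]$ the lower diamond $[u,q]$ selects the unique atom $\sigma(q)$ with $\tau(u,\sigma(q),q)=\av$, and for each atom $p$ the upper diamond $[p,w]$ selects the unique coatom $\beta(p)$ with $\tau(p,\beta(p),w)=\av$; the rising chains of $[u,w]$ then biject with the fixed points of $\beta\circ\sigma$. A self-map of a two-element set has a unique fixed point if and only if it is constant, $\beta\circ\sigma$ is constant if and only if $\sigma$ or $\beta$ is, and $\sigma$ (resp.\ $\beta$) is constant if and only if both atoms (resp.\ both coatoms) of $[u,w]$ are breakpoints of $[u,w]$. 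Hence $[u,w]$ has a unique rising chain if and only if both its atoms, or both its coatoms, are breakpoints of $[u,w]$. The key observation is that whether the atoms of $[u,w]$ are breakpoints of $[u,w]$ depends only on $u$, and whether the coatoms are depends only on $w$. Letting $[u,w]$ range over all rank-$3$ intervals with middle ranks $j$ and $j+1$ (so $u$ has rank $j-1$, with $u=\hz$ when $j=1$, and $w$ has rank $j+2$, with $w=\ho$ when $j=n-2$), the requirement that all such intervals have a unique rising chain collapses to the single disjunction $R_{j-1}\vee L_j$, where for $1\le i\le n-3$, $R_i$ means ``$a_i$ is independent of its second argument'', $L_i$ means ``$a_i$ is independent of its first argument'', while $R_0$ and $L_{n-2}$ mean ``$a_0$ is constant'' and ``$a_{n-2}$ is constant''. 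So the rank-$3$ part of the hypothesis amounts exactly to the list $R_{j-1}\vee L_j$ for $j=1,\dots,n-2$, whereas by the first paragraph a breakpoint exists precisely when $R_0\vee L_{n-2}\vee\bigvee_{k=1}^{n-3}(R_k\wedge L_k)$. Verifying this collapse — in particular handling the degenerate endpoints $\hz$ and $\ho$ correctly — is the delicate bookkeeping in the argument.

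Part (i) is then a short propositional deduction: suppose $R_0$, $L_{n-2}$, and every conjunction $R_k\wedge L_k$ for $1\le k\le n-3$ were false. From $R_0\vee L_1$ and $\lnot R_0$ we obtain $L_1$; then $\lnot(R_1\wedge L_1)$ gives $\lnot R_1$; then $R_1\vee L_2$ gives $L_2$; iterating, we get $L_k$ and $\lnot R_k$ for every $1\le k\le n-3$, and finally $\lnot R_{n-3}$ together with $R_{n-3}\vee L_{n-2}$ forces $L_{n-2}$, a contradiction. Hence $T_n$ has a breakpoint $y$; as observed above, $\overline{y}$ is a breakpoint as well.

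For part (ii), every proper interval of $T_n$ of rank at most $3$ has a unique rising chain by hypothesis, and every proper interval of larger rank — whose rank-$\le 3$ subintervals are rank-$\le 3$ intervals of $T_n$ — is, with the restriction of $\tau$, a smaller instance of the theorem and hence also has a unique rising chain by the induction hypothesis. Thus all proper intervals of $T_n$ are good, and it remains to show $T_n$ itself has a unique rising chain. Take the breakpoint $y$ from part (i), at rank $k$ say; by the diamond relation one of $y,\overline{y}$, call it $v$, has $\tau(\,\cdot\,,v,\,\cdot\,)\equiv\av$, and then $\tau(\,\cdot\,,\overline{v},\,\cdot\,)\equiv\bv$. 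Every rising maximal chain of $T_n$ passes through $v$ (a maximal chain through $\overline{v}$ contains a triple of value $\bv$), and on a maximal chain through $v$ the single triple with $v$ in the middle is automatically $\av$ while the remaining triples split between $[\hz,v]$ and $[v,\ho]$; hence the rising chains of $T_n$ correspond bijectively to pairs consisting of a rising chain of $[\hz,v]$ and a rising chain of $[v,\ho]$. As $[\hz,v]$ and $[v,\ho]$ are proper intervals, each has exactly one rising chain, so $T_n$ has exactly one, and therefore $\tau$ is a triple assignment. This completes the induction.
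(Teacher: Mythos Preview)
Your proof is correct, and for part (ii) it matches the paper's: both split $T_n$ at a breakpoint $v$ with $\tau(\cdot,v,\cdot)\equiv\av$, observe that rising chains must pass through $v$, and then recurse on the two proper subintervals $[\hz,v]$ and $[v,\ho]$.

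For part (i), however, your route is genuinely different. The paper proves the existence of a breakpoint by a structural induction on $n$: after handling $n=3$ by a direct weight-comparison argument, it applies the induction hypothesis to the three subintervals $[x_1,\ho]$, $[\overline{x_1},\ho]$, and $[\hz,x_3]$, and a short case analysis forces a global breakpoint (essentially at rank $2$ if none appears higher). You instead encode $\tau$ by the Boolean functions $a_0,\dots,a_{n-2}$, show that the rank-$3$ hypothesis is exactly the chain of disjunctions $R_{j-1}\vee L_j$ (your observation that the atom/coatom-breakpoint properties depend only on $u$, respectively $w$, together with the elementary fact that $\forall u\,\forall w\,(A(u)\vee C(w))$ is equivalent to $(\forall u\,A(u))\vee(\forall w\,C(w))$, is what makes this collapse work), and then run a propositional chase to force one of $R_0$, $L_{n-2}$, or some $R_k\wedge L_k$. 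Your argument is a bit heavier in notation but more uniform: it treats all $n\ge 3$ at once and makes explicit exactly which part of the rank-$3$ hypothesis is being used at each step, whereas the paper's induction is shorter but needs the $n=3$ base case done by hand. Both are valid; your version gives a cleaner picture of \emph{why} a breakpoint must exist.
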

\begin{proof}
First we show the existence of a breakpoint
by induction on the rank.
For the case $n=2$ the statement is straightforward
to verify.
Next consider the case $n=3$. 
Assuming that $\overline{x_{1}}$ is not a breakpoint,
we have that
$\tau(\hz,\overline{x_{1}},\overline{x_{2}})
  =
 \overline{\tau(\hz,\overline{x_{1}},x_{2})}
  =
 \tau(\hz,x_{1},x_{2})$.
Similarly, assuming that $\overline{x_{2}}$
is not a breakpoint we have 
$\tau(\overline{x_{1}},\overline{x_{2}},\ho)
  =
 \overline{\tau(x_{1},\overline{x_{2}},\ho)}
  =
 \tau(x_{1},x_{2},\ho)$.
Hence the two chains
$\{\hz < x_{1} < x_{2} < \ho\}$
and
$\{\hz < \overline{x_{1}} < \overline{x_{2}} < \ho\}$
have the same weight.
This is a contradiction
since every entry in the flag $h$-vector of $T_{3}$ is
at most $1$.
Thus at least one assumption is wrong and
we conclude that there is a breakpoint.

For the induction step, assume that $n \geq 4$.
Consider the three intervals
$[x_{1},\ho]$,
$[\overline{x_{1}},\ho]$
and
$[\hz, x_{3}]$.
All are butterfly posets of rank less than $n$
and the induction hypothesis holds for them.
Hence the interval $[x_{1},\ho]$
contains a breakpoint $x_{i}$, for some $2 \leq i \leq n-1$.
If $i \geq 3$ this is a breakpoint for the whole
poset and we are done. Hence we assume that $i=2$
and we have that
$\tau(x_{1},x_{2},x_{3}) = \tau(x_{1},x_{2},\overline{x_{3}})$.
Similarly,
the interval
$[\overline{x_{1}},\ho]$ has a breakpoint.
Avoiding a breakpoint of rank $3$ or higher
in $[\overline{x_{1}},\ho]$,
yields
$\tau(\overline{x_{1}},x_{2},x_{3})
 = \tau(\overline{x_{1}},x_{2},\overline{x_{3}})$.

Finally, the interval
$[\hz,x_{3}]$ contains a breakpoint.
If it is $x_{1}$ then it is a breakpoint for the
entire poset. If it is $x_{2}$ we have
that
$\tau(x_{1},x_{2},x_{3}) = \tau(\overline{x_{1}},x_{2},x_{3})$.
By concatenating these three equalities we obtain that $x_{2}$
is a breakpoint for the poset $T_{n}$,
completing the induction.

It remains to show that $\tau$ is
a triple assignment.
Let $[x,y]$ be an interval in $T_{n}$.
Since the interval $[x,y]$ is isomorphic to
a butterfly poset there
is a breakpoint $z$ in this interval.
Also note that $\overline{z}$ is also a breakpoint.
Without loss of generality we may assume that
the value of the function $\tau$ at $z$ is $\av$,
that is, 
$\tau(z^{\prime},z,z^{\prime\prime}) = \av$
for all $z^{\prime}$ and $z^{\prime\prime}$.
Now concatenate the two unique rising chains in
the intervals $[x,z]$ and $[z,y]$.
The result is a rising chain.
Furthermore,
it is the only possible rising chain in the interval $[x,y]$.
This proves that $\tau$ is a triple assignment.
\end{proof}

\begin{figure}
\setlength{\unitlength}{1.0mm}
\newcommand{\point}{\circle*{1.5}}

\begin{center}
\begin{picture}(30,30)(0,0)

\put(15,0){\point}
\multiput(0,10)(0,10){2}{\multiput(0,0)(10,0){4}{\put(0,0){\point}}}
\put(15,30){\point}

\put(15,0){\line(-3,2){15}}
\put(15,0){\line(-1,2){5}}
\put(15,0){\line(1,2){5}}
\put(15,0){\line(3,2){15}}

\multiput(0,10)(10,0){4}{\put(0,0){\line(0,1){10}}}
\multiput(0,10)(20,0){2}{\put(0,0){\line(1,1){10}}}
\multiput(10,10)(20,0){2}{\put(0,0){\line(-1,1){10}}}

\put(15,30){\line(-3,-2){15}}
\put(15,30){\line(-1,-2){5}}
\put(15,30){\line(1,-2){5}}
\put(15,30){\line(3,-2){15}}

\put(5,3.5){{\scriptsize 1}}
\put(11,3.5){{\scriptsize 3}}
\put(18,3.5){{\scriptsize 2}}
\put(24.5,3.5){{\scriptsize 2}}

\put(-2,14){{\scriptsize 2}}
\put(1,13){{\scriptsize 2}}
\put(7,13){{\scriptsize 2}}
\put(11,14){{\scriptsize 2}}

\put(18,14){{\scriptsize 1}}
\put(21,13){{\scriptsize 3}}
\put(27,13){{\scriptsize 3}}
\put(31,14){{\scriptsize 1}}

\put(5,24.5){{\scriptsize 3}}
\put(11,24.5){{\scriptsize 1}}
\put(18,24.5){{\scriptsize 2}}
\put(24.5,24.5){{\scriptsize 2}}

\end{picture}
\hspace*{30 mm}
\begin{picture}(30,50)(0,0)

\put(15,0){\point}
\multiput(0,10)(0,10){4}{\multiput(0,0)(10,0){4}{\put(0,0){\point}}}
\put(15,50){\point}

\put(15,0){\line(-3,2){15}}
\put(15,0){\line(-1,2){5}}
\put(15,0){\line(1,2){5}}
\put(15,0){\line(3,2){15}}

\multiput(0,0)(0,10){3}{
  \multiput(0,10)(10,0){4}{\put(0,0){\line(0,1){10}}}
  \multiput(0,10)(20,0){2}{\put(0,0){\line(1,1){10}}}
  \multiput(10,10)(20,0){2}{\put(0,0){\line(-1,1){10}}}}

\put(15,50){\line(-3,-2){15}}
\put(15,50){\line(-1,-2){5}}
\put(15,50){\line(1,-2){5}}
\put(15,50){\line(3,-2){15}}

\end{picture}
\end{center}
\caption{The two Eulerian posets $P_{3}$ and $P_{5}$.
The poset $P_{3}$ has an $R$-labeling, whereas $P_{5}$
does not.}
\label{figure_one}
\end{figure}
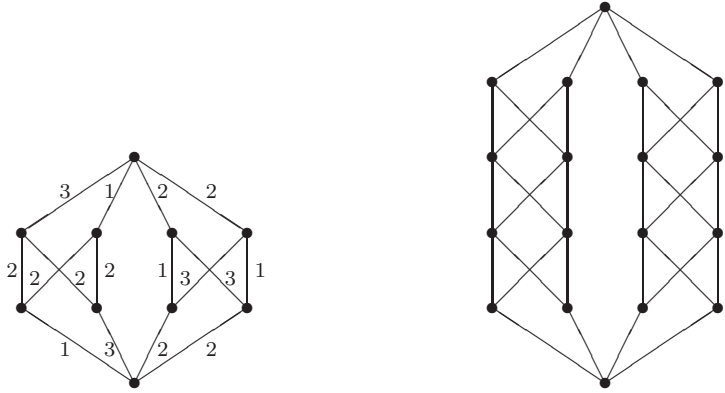

\abc
\section{A class of posets without an $R$-labeling}
\xyz

Let $P_{n}$ consist of two copies of the butterfly poset $T_{n}$
where we have identified the minimal elements and the maximal
elements. See Figure~\ref{figure_one} for the two posets
$P_{3}$ and $P_{5}$.

The flag $f$-vector of the poset $P_{n}$ is given by
$f_{S} = 2 \cdot 2^{|S|}$ for $S$ non-empty and
$f_{\emptyset} = 1$.
Hence its flag $h$-vector is non-negative and is given by
$$  h_{S}(P_{n})
       =  2 - (-1)^{|S|}
       =       \left\{  \begin{array}{c l}
                         1 & \mbox{ if } |S| \mbox{ is even,} \\
                         3 & \mbox{ if } |S| \mbox{ is odd.}
                         \end{array} \right. $$
Another way to observe this is to compute
the $\ab$-index of this poset. It is
$\Psi(P_{n}) = 2 \cdot \Psi(T_{n}) - (\av-\bv)^{n-1}
             = 2 \cdot \cv^{n-1} - (\av-\bv)^{n-1}$;
see for instance~\cite[Section~11]{Ehrenborg_Readdy_c}.

When $n$ is odd the poset $P_{n}$ is Eulerian.
In fact, its $\cd$-index is given by
$\Psi(P_{n}) = 2 \cdot \cv^{2k} - (\cv^{2} - 2 \cdot \dv)^{k}$
for $n = 2k+1$. For $k \geq 2$ note that 
every $\cd$-monomial having an even number of $\dv$'s and different
from the monomial~$\cv^{n-1}$ has a negative coefficient.

\begin{theorem}
The poset $P_{n}$ for $n \geq 4$ does not have an $R$-labeling.
\end{theorem}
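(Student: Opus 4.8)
The plan is to argue by contradiction, combining the equivalence between $R$-labelings and triple assignments (the Proposition) with the butterfly structure theorem (Theorem~\ref{theorem_butterfly}) and the $\ab$-index computation for $P_{n}$ recorded above. So suppose $P_{n}$ has an $R$-labeling; equivalently, there is a triple assignment $\tau : W(P_{n}) \longrightarrow \{\av,\bv\}$. Let $T_{1}$ and $T_{2}$ be the two copies of $T_{n}$ inside $P_{n}$, sharing the elements $\hz$ and $\ho$, and let $\tau_{1}$ and $\tau_{2}$ be the restrictions of $\tau$ to $W(T_{1})$ and $W(T_{2})$.

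The first step is to check that $\tau_{1}$, and symmetrically $\tau_{2}$, satisfies the hypothesis of Theorem~\ref{theorem_butterfly}. The key observation is that every interval of $T_{1}$ other than $[\hz,\ho]$ is also an interval of $P_{n}$: if $u \leq v$ in $T_{1}$ with $(u,v) \neq (\hz,\ho)$, then no element of $T_{2} \setminus \{\hz,\ho\}$ can lie weakly between $u$ and $v$, since in $P_{n}$ every element of $T_{2} \setminus \{\hz,\ho\}$ is incomparable to every element of $T_{1} \setminus \{\hz,\ho\}$ and lies strictly between $\hz$ and $\ho$. Because $n \geq 4$, every interval of $T_{1}$ of rank at most $3$ has rank strictly less than $n$, hence is not $[\hz,\ho]$, hence is an interval of $P_{n}$ and so contains a unique $\tau$-rising maximal chain; this chain uses only triples from $W(T_{1})$, so it is $\tau_{1}$-rising. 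Therefore Theorem~\ref{theorem_butterfly}(ii) applies, and $\tau_{1}$ is a triple assignment of $T_{1} \cong T_{n}$, and likewise $\tau_{2}$ of $T_{2}$.

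Next I would compute the $\ab$-index of $P_{n}$ from $\tau$ and split it over the two copies. Every maximal chain of $P_{n}$ lies entirely in $T_{1}$ or entirely in $T_{2}$: once the atom of the chain is chosen in one copy, each successive cover keeps the chain in that copy, because the elements covering a non-minimal element of $T_{i}$ again lie in $T_{i}$ (the only shared non-minimal element is $\ho$, reached only at the very top). Hence, using that $\tau$ is a triple assignment of $P_{n}$ while each $\tau_{i}$ is a triple assignment of $T_{i}$, and that $\Psi(T_{n}) = \cv^{n-1}$,
$$ \Psi(P_{n}) \;=\; \sum_{c} \wt(c) \;=\; \Psi(T_{1}) + \Psi(T_{2}) \;=\; 2 \cdot \cv^{n-1}, $$
where the middle equality is the chain splitting just described. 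But it was shown above that $\Psi(P_{n}) = 2 \cdot \cv^{n-1} - (\av-\bv)^{n-1}$, and $(\av-\bv)^{n-1} \neq 0$ for $n \geq 2$ — equivalently, $h_{S}(P_{n}) = 3$ when $|S|$ is odd, not $2$. This contradiction shows that no triple assignment, hence no $R$-labeling, exists.

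I expect the only delicate point to be the first step: making precise that the rank-$\leq 3$ intervals of each copy $T_{i}$ are genuine intervals of $P_{n}$, which is exactly where the hypothesis $n \geq 4$ is used. Indeed, for $n = 3$ the interval $[\hz,\ho]$ of $T_{i}$ has rank $3$ but is \emph{not} an interval of $P_{3}$, the argument breaks down, and $P_{3}$ does in fact admit an $R$-labeling. The remaining steps are routine applications of results already established.
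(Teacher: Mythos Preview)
Your proof is correct and follows the same overall approach as the paper: restrict the assumed triple assignment to each butterfly copy, use $n\geq 4$ to see that rank-$\leq 3$ intervals of each copy are genuine intervals of $P_n$, and invoke Theorem~\ref{theorem_butterfly}(ii). The only difference is in how the contradiction is extracted at the end. The paper simply observes that, since $\tau_1$ and $\tau_2$ are triple assignments of $T_1$ and $T_2$, each copy contains a rising maximal chain from $\hz$ to $\ho$, so $P_n$ has (at least) two rising chains in $[\hz,\ho]$, contradicting the uniqueness clause for $\tau$. Your route via $\Psi(P_n)=2\cv^{n-1}$ versus $\Psi(P_n)=2\cv^{n-1}-(\av-\bv)^{n-1}$ is a correct but more elaborate repackaging of exactly this fact (the mismatch in the $\av^{n-1}$ coefficient \emph{is} the double rising chain); the paper's version avoids appealing to the $\ab$-index formula for $P_n$ altogether.
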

\begin{proof}
Let $P$ and $Q$ be the two subposets of $P_{n}$
such that they are both isomorphic to $T_{n}$,
their union is $P_{n}$ and they intersect in $\{\hz,\ho\}$.
Assume that $P_{n}$ has a triple assignment $\tau$.
Consider $\tau$ restricted to the subposet $P$.
Since every interval of
length $3$ or less in $P$ is an interval in $P_{n}$,
the poset $P$ with the function $\tau$ satisfies the
condition of Theorem~\ref{theorem_butterfly}.
Hence $\tau$ is a triple assignment for the poset $P$.
Hence there is a rising chain in the poset $P$.

By the exact same reasoning,
there is a rising chain in the poset $Q$,
yielding the contradiction that~$P_{n}$
has two rising chains.
\end{proof}

\abc
\section{Concluding remarks}
\xyz

In the literature there are examples of
non-shellable
simplicial complexes whose geometric realization are
$3$-dimensional balls and spheres.
For instance, see~\cite{Ehrenborg_Hachimori,Lickorish,Rudin,Vince}
and the references therein.
The difficulty in each of these papers is not to find a non-shellable
complex, but to find a non-shellable object
having a natural geometric realization.
Thus we sharpen the question of this paper to:
Is there a poset which lacks an $R$-labeling having
a positive flag $h$-vector such that
\vspace*{-6mm}
\begin{itemize}
\item[(i)]
it is also a lattice?
\vspace*{-3mm}
\item[(ii)]
its chain complex has the geometric realization of a sphere?
\end{itemize}
\vspace*{-6mm}
Furthermore, can one find a poset having $R$-labelings
but where none of the labelings is an $EL$-labeling? 
The similar question concerning whether there are posets
which are shellable
but not $EL$-shellable has been answered independently in
two papers~\cite{Vince_Wachs,Walker}.

Observe that the poset $P_{n}$ for $n$ odd is obtained by
doubling a half-Eulerian poset. This notion was introduced by Bayer
and Hetyei~\cite{Bayer_Hetyei}.
Their paper gives a plethora of examples.
What is known about labelings for these posets in general?

\abc
\section*{Acknowledgements}
\xyz

The authors would like to thank the referee
for his comments.

\newcommand{\journal}[6]{{\sc #1,} #2, {\it #3} {\bf #4} (#5), #6.}
\newcommand{\book}[4]{{\sc #1,} ``#2,'' #3, #4.}
\newcommand{\bookf}[5]{{\sc #1,} ``#2,'' #3, #4, #5.}
\newcommand{\books}[6]{{\sc #1,} ``#2,'' #3, #4, #5, #6.}
\newcommand{\collection}[6]{{\sc #1,}  #2, #3, in {\it #4}, #5, #6.}
\newcommand{\thesis}[4]{{\sc #1,} ``#2,'' Doctoral dissertation, #3, #4.}
\newcommand{\springer}[4]{{\sc #1,} ``#2,'' Lecture Notes in Math.,
                          Vol.\ #3, Springer-Verlag, Berlin, #4.}
\newcommand{\preprint}[3]{{\sc #1,} #2, preprint #3.}
\newcommand{\preparation}[2]{{\sc #1,} #2, in preparation.}
\newcommand{\appear}[3]{{\sc #1,} #2, to appear in {\it #3}}
\newcommand{\submitted}[3]{{\sc #1,} #2, submitted to {\it #3}}
\newcommand{\JCTA}{J.\ Combin.\ Theory Ser.\ A}
\newcommand{\AdvancesinMathematics}{Adv.\ Math.}
\newcommand{\JournalofAlgebraicCombinatorics}{J.\ Algebraic Combin.}

\newcommand{\communication}[1]{{\sc #1,} personal communication.}


{\small

}

\bigskip

\noindent
{\em R.\ Ehrenborg and M.\ Readdy,
Department of Mathematics,
University of Kentucky,
Lexington, KY 40506-0027,}
\{{\tt jrge},{\tt readdy}\}{\tt @ms.uky.edu}

\end{document}